\DeclareMathOperator*{\argmax}{arg\,max}
\DeclareMathOperator*{\argmin}{arg\,min}
  \theoremstyle{plain}
  \newtheorem{assumption}{Assumption}
\newtheorem{theorem}{Theorem}
\newtheorem{lemma}{Lemma}
\newtheorem{definition}{Definition}
\newtheorem{proposition}{Proposition}
\title{Collaborative Safe Formation Control for Coupled Multi-Agent Systems}
\date{} 					
\author{ \href{https://orcid.org/0000-0002-2489-4411}{\includegraphics[scale=0.06]{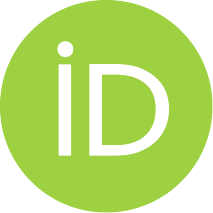}\hspace{1mm}Brooks A. Butler}\\
	Elmore Family School of Electrical and Computer Engineering\\
	Purdue University\\
	\texttt{brooksbutler@purdue.edu} \\
        \And
        Chi Ho Leung\\
	Elmore Family School of Electrical and Computer Engineering\\
	Purdue University\\
	\texttt{leung61@purdue.edu} \\
        \And
	\href{https://orcid.org/0000-0002-4095-7320}{\includegraphics[scale=0.06]{orcid.pdf}\hspace{1mm}Philip E. Par\'{e}}
        \thanks{This work was partially funded by Purdue’s Elmore Center for Uncrewed Aircraft Systems and the National Science Foundation,
        grant NSF-ECCS \#2238388.}\\
	Elmore Family School of Electrical and Computer Engineering\\
	Purdue University\\
	\texttt{philpare@purdue.edu} \\
}
\begin{document}
\maketitle

\begin{abstract}
The safe control of multi-robot swarms is a challenging and active field of research, where common goals include maintaining group cohesion while simultaneously avoiding obstacles and inter-agent collision. Building off our previously developed theory for distributed collaborative safety-critical control for networked dynamic systems, we propose a distributed algorithm for the formation control of robot swarms given individual agent dynamics, induced formation dynamics, and local neighborhood position and velocity information within a defined sensing radius for each agent. Individual safety guarantees for each agent are obtained using rounds of communication between neighbors to restrict unsafe control actions among cooperating agents through safety conditions derived from high-order control barrier functions. We provide conditions under which a swarm is guaranteed to achieve collective safety with respect to multiple obstacles using a modified collaborative safety algorithm. We demonstrate the performance of our distributed algorithm via simulation in a simplified physics-based environment.
\end{abstract}

\section{Introduction}

Nature has always inspired scientists and engineers to design elegant solutions for real-life problems.
One of the nature-inspired ideas in the field of automatic control comes from the observation that collective behavior in nature is often governed by relatively simple interactions among individuals (\cite{strogatz2004sync}).
The set of collaboration rules introduced by \cite{reynolds1987flocks} is one of the early attempts in the literature to describe collective formation behavior in the animal kingdom.
In more recent years, multi-agent formation problem has received special attention in robotics and automatic control due to its broad range of applications and theoretical challenges.
While it is impossible to exhaustively categorize every formation control-related research, we can organize them in terms of the fundamental ideas behind the control schemes (\cite{beard2001coordination, reynolds1987flocks}), sensing capability and interaction topology of the formation controller (\cite{oh2015survey}), and the formation control-induced problems of interest such as the consensus reaching problem (\cite{ren2010distributed}).

Some generalizations of the formation control-induced problems also find their application in other multi-agent cyber-physical systems outside the robotic community.
Some examples of critical multi-agent model applications include the mitigation of epidemic-spreading processes (\cite{pare2020modeling, butler2023optimal}), smart grid management (\cite{tuballa2016review}), and uncrewed aerial drone swarms (\cite{tahir2019swarms}).
Since many of these multi-agent cyber-physical systems have become ubiquitous in modern society, effective and safe operation in multi-agent systems is crucial, as disruptions in these interconnected systems can potentially have far-reaching societal and economic consequences.

Theoretical frameworks and techniques from the study of safety-critical control are natural solutions to the problem of collaborating safety requirements in the multi-agent formation problem.
Foundational work on safety critical control can be traced back to the 1940s (\cite{nagumo1942lage, blanchini1999set}).
Recently, the introduction and refinement of control barrier functions (CBFs) (\cite{ames2016control, ames2019control}) has induced new excitement in the field of safety-critical control.
Since their introduction, control barrier
functions have been used in numerous applications to provide safety guarantees in various dynamic system models (\cite{chung2018survey, wang2017safety}).
Moreover, multiple recent studies have reported CBFs' practicality and theoretical soundness in solving the  multi-agent obstacle avoidance problem (\cite{wang2017safety, santillo2021collision, jankovic2021collision}).


In this paper, we extend the work in \cite{butler2023distributed} to design a non-intrusive collaborative safety filter for formation control with online obstacle avoidance guarantees. The problem formulation and analysis are performed under the formality of CBFs.
The collaborative safety filter is realized by 
a novel communication algorithm wherein agents share their maximum safety capability within their neighborhood in the formation.
The maximum safety capability is computed from each agents' local distance-based sensor data, and therefore, is flexible for a wide range of real-life implementation scenarios.
We show in simulation that rounds of communication between agents terminate in finite time with consensus on the desired collaboratively safe control action if the underlying centralized constraint optimization problem is feasible.
The proofs for all lemmas and theorems presented in this work can be found in the full version of this paper at \cite{butler2023collaborative}.

We organize the remainder of this paper as follows. We introduce some preliminaries for networked models and safety-critical control in Section~\ref{sec:prelim}, then formally define the safe formation control problem in Section~\ref{sec:safe_form_prob}. We then present our proposed method for safe formation control through active collaboration via communication in Section~\ref{sec:safe_with_colab} and illustrate these results with a simplified two-dimensional formation control example in Section~\ref{sec:application}. 

\subsection{Notation}

Let $|\mathcal{C}|$ denote the cardinality of the set $\mathcal{C}$. $\mathbb{R}$ and $\mathbb{N}$ are the set of real numbers and positive integers, respectively. Let $C^r$ denote the set of functions $r$-times continuously differentiable in all arguments. We define ${\Vert \cdot \Vert}_2$ and ${\Vert \cdot \Vert}_{\infty}$ to be the two-norm and infinity norm of a given vector argument, respectively. We notate $\mathbf{0}$ and $\mathbf{1}$ to be vectors of all zeros and all ones, respectively, of the appropriate size given by context and $[v]_k$ to be the $k$th element of vector $v$. A monotonically increasing continuous function $\alpha: \mathbb{R}_{+} \rightarrow \mathbb{R}_{+}$ with $\alpha(0) = 0$ is termed as class-$\mathcal{K}$. We define $[n] \subset \mathbb{N}$ to be a set of indices $\{1, 2, \dots, n\}$.
We define the Lie derivative of the function $h:\mathbb{R}^N \rightarrow \mathbb{R}$ with respect to the vector field generated by $f:\mathbb{R}^N \rightarrow \mathbb{R}^N$ as
\begin{equation}
    \mathcal{L}_f h(x) = \frac{\partial h(x)}{\partial x} f(x).
\end{equation}
We define higher-order Lie derivatives with respect to the same vector field $f$ with a recursive formula (\cite{robenack2008computation}), where $k>1$, as
\begin{equation}
    \mathcal{L}^k_f h(x) = \frac{\partial \mathcal{L}^{k-1}_f h(x)}{\partial x} f(x).
\end{equation}
We compute the Lie derivative of $h$ along the vector field generated by $f$ and then along the vector field generated by $g$ as
\begin{equation}
    \mathcal{L}_g \mathcal{L}_f h(x) = \frac{\partial}{\partial x}\left(\frac{\partial h(x)}{\partial x} f(x)\right)g(x).
\end{equation}

\section{Preliminaries} \label{sec:prelim}

We define a networked system using a graph $\mathcal{G} = (\mathcal{V}, \mathcal{E})$, where $\mathcal{V}$ is the set of $n = \vert \mathcal{V} \vert$ nodes, $ \mathcal{E} \subseteq \mathcal{V}\times \mathcal{V} $ is the set of edges. Let $\mathcal{N}_i$ be the set of all neighbors with an edge connection to node $i \in [n]$, where 
\begin{equation}
    \mathcal{N}_i = \{j \in [n]\setminus \{i\}: (i,j) \in \mathcal{E} \}.
\end{equation}
We further define $x_i$ to be the state vector for agent $i \in  [n]$, $x_{\mathcal{N}_i}$ to be the concatenated states of all neighbors to agent $i$, i.e. $x_{\mathcal{N}_i} = \left( x_j, \forall j \in \mathcal{N}_i\right)$, and $x$ to be the full state of the networked system.

Recall the definition of \textit{high-order barrier functions} (HOBF) (\cite{xiao2019control,xiao2021high}), where we define a series of functions in the following form
\begin{equation} \label{eq:HO_funcs}
    \begin{aligned}
        \psi_i^0(x) &:= h_i(x) \\
        \psi_i^1(x) &:= \dot{\psi}_i^0(x) + \alpha_i^1(\psi_i^0(x)) \\
        & \vdots \\
        \psi_i^k(x) &:= \dot{\psi}_i^{k-1}(x) + \alpha_i^r(\psi_i^{k-1}(x))
    \end{aligned}
\end{equation}
where $\alpha_i^1(\cdot),\alpha_i^1(\cdot), \dots, \alpha_i^k(\cdot)$ denote class-$\mathcal{K}$ functions of their argument. These functions provide definitions for the corresponding series of sets
\begin{equation} \label{eq:HO_sets}
    \begin{aligned}
        \mathcal{C}_i^1 &:= \{ x \in \mathbb{R}^N: \psi_i^0(x) \geq 0 \} \\
        \mathcal{C}_i^2 &:= \{ x \in \mathbb{R}^N: \psi_i^1(x) \geq 0 \} \\
        & \vdots \\
        \mathcal{C}_i^k &:= \{ x \in \mathbb{R}^N: \psi_i^{k-1}(x) \geq 0 \} \\
    \end{aligned}
\end{equation}
which yield the following definition.
\begin{definition}
    Let $\mathcal{C}_i^1, \mathcal{C}_i^2, \dots, \mathcal{C}_i^k$ be defined by \eqref{eq:HO_funcs} and \eqref{eq:HO_sets}. We have that $h_i$ is a \textit{node-level barrier function} (NBF) for node $i\in [n]$ if $h_i \in C^k$ and there exist differentiable class-$\mathcal{K}$ functions $\alpha_i^1,\alpha_i^2,\dots, \alpha_i^k$ such that $\psi_i^k(x) \geq 0$ for all $x \in \bigcap_{r=1}^k \mathcal{C}_i^r$. 
\end{definition}
\noindent
This definition leads naturally to the following lemma (which is a direct result of Theorem 4 in \cite{xiao2019control}).
\begin{lemma} \label{lem:NBF}
If $h_i$ is an NBF, then $\bigcap_{r=1}^k \mathcal{C}_i^r$ is forward invariant.
\end{lemma}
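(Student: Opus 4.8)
The plan is to exploit the cascade structure relating $\psi_i^0,\dots,\psi_i^k$ in \eqref{eq:HO_funcs} and reduce forward invariance of $\bigcap_{r=1}^k \mathcal{C}_i^r$ to a chain of scalar differential inequalities, each of which I resolve by a comparison argument. The analytic backbone is the following standard fact: if a scalar, absolutely continuous signal $y(t)$ satisfies $\dot y(t) \ge -\alpha(y(t))$ for a differentiable (hence locally Lipschitz) class-$\mathcal{K}$ function $\alpha$ with $y(0) \ge 0$, then $y(t) \ge 0$ for as long as it is defined. This holds because the comparison system $\dot z = -\alpha(z)$ with $z(0)=y(0)$ has $z \equiv 0$ as an equilibrium (since $\alpha(0)=0$), so uniqueness of solutions prevents $z$ from crossing zero, and $y(t)\ge z(t)\ge 0$.

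First I would fix a trajectory $x(\cdot)$ of the closed-loop system with $x(0) \in \bigcap_{r=1}^k \mathcal{C}_i^r$ and let $I = [0,T)$ be the maximal interval on which $x(t)$ remains in this intersection. On $I$ the NBF hypothesis applies, so $\psi_i^k(x(t)) \ge 0$; rewriting the definition $\psi_i^k = \dot\psi_i^{k-1} + \alpha_i^k(\psi_i^{k-1})$ this is exactly the differential inequality $\dot\psi_i^{k-1}(x(t)) \ge -\alpha_i^k(\psi_i^{k-1}(x(t)))$. Since $x(0) \in \mathcal{C}_i^k$, i.e. $\psi_i^{k-1}(x(0)) \ge 0$, the comparison fact above gives $\psi_i^{k-1}(x(t)) \ge 0$ on $I$, that is, $x(t) \in \mathcal{C}_i^k$.

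Next I would descend the cascade by backward induction on the level $r$. Suppose $\psi_i^{r}(x(t)) \ge 0$ on $I$ for some $1 \le r \le k-1$, so that $x(t)\in\mathcal{C}_i^{r+1}$. Then $\psi_i^r = \dot\psi_i^{r-1} + \alpha_i^r(\psi_i^{r-1}) \ge 0$ yields $\dot\psi_i^{r-1}(x(t)) \ge -\alpha_i^r(\psi_i^{r-1}(x(t)))$, and since $\psi_i^{r-1}(x(0)) \ge 0$ the same comparison argument gives $\psi_i^{r-1}(x(t)) \ge 0$, i.e. $x(t)\in\mathcal{C}_i^r$, on $I$. Iterating down to $r=1$ produces $\psi_i^0(x(t)) = h_i(x(t)) \ge 0$ and, collecting all levels, $x(t) \in \bigcap_{r=1}^k \mathcal{C}_i^r$ for every $t \in I$.

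Finally I would close the maximality loop: because each $\mathcal{C}_i^r$ is closed, the containment just derived persists at $t=T$ by continuity, so if $T$ were strictly less than the existence time of $x(\cdot)$ the trajectory would still lie in the intersection at $T$ and could be continued within it, contradicting maximality; hence $x(t)$ stays in $\bigcap_{r=1}^k \mathcal{C}_i^r$ throughout its domain, which is forward invariance. I expect the main obstacle to be precisely this bootstrapping step: the NBF inequality $\psi_i^k \ge 0$ is assumed only on the intersection, not a priori along the whole trajectory, so the cascade cannot be applied to one set in isolation — invariance of the entire intersection must be established simultaneously on the maximal interval, and the closedness/continuation argument is what rules out escape through a corner of the intersection. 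Everything else reduces to the repeated, routine application of the scalar comparison lemma, and the statement can alternatively be quoted directly from Theorem 4 of \cite{xiao2019control}.
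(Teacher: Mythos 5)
Your cascade-plus-comparison strategy is the right one: the paper itself offers no proof of Lemma~\ref{lem:NBF}, quoting it as a direct consequence of Theorem~4 of \cite{xiao2019control}, and the proof of that theorem is precisely the level-by-level comparison argument you outline. You also correctly identify the crux, namely that the NBF inequality $\psi_i^k(x)\ge 0$ is hypothesized only on the intersection $\bigcap_{r=1}^k\mathcal{C}_i^r$, not along the whole trajectory.

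The problem is that your mechanism for resolving that crux does not actually resolve it. First, note that on your maximal interval $I=[0,T)$ the conclusions of the comparison step, $\psi_i^{r-1}(x(t))\ge 0$ for all $r$, already hold by the very definition of $I$, so the entire burden of the proof rests on the final continuation step. That step is circular: closedness does give $x(T)\in\bigcap_{r=1}^k\mathcal{C}_i^r$, but the claim that the trajectory ``could be continued within it'' is exactly local forward invariance of the intersection at $x(T)$ --- the property being proven --- so maximality of $T$ is not contradicted by anything you have established. Nor can the comparison lemma be invoked on $[T,T+\epsilon)$, because immediately after $T$ the trajectory may lie outside the intersection, where the NBF hypothesis supplies no differential inequality at all. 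Two honest ways to close the gap: (i) when $\psi_i^{r-1}(x(0))>0$ for every $r$, your comparison bound is strict --- solutions of $\dot z=-\alpha_i^r(z)$ with $z(0)>0$ and locally Lipschitz $\alpha_i^r$ remain strictly positive in finite time --- so $\psi_i^{r-1}(x(T))\ge z_r(T)>0$ for all $r$, and continuity then keeps the trajectory in the intersection on $[T,T+\epsilon]$, a genuine contradiction; (ii) when some constraint is active (equal to zero), the cascade structure gives $\dot\psi_i^{r-1}(x)\ge-\alpha_i^r(\psi_i^{r-1}(x))=0$ at every active level, which is Nagumo's sub-tangentiality condition, and one concludes by Nagumo's theorem rather than by comparison (modulo constraint-qualification issues where several constraints are simultaneously active). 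Alternatively, do what the paper does and simply cite Theorem~4 of \cite{xiao2019control}.
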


\section{Safe Formation Control Problem} \label{sec:safe_form_prob}

In this section, we define a general version of the safe formation control problem with respect to applying a safety filter to control actions that affect individual agent behavior governed by assumed formation dynamics.
For the sake of notational brevity, we use $x$, the full state of the network, and $(x_i, x_{\mathcal{N}_i})$, the concatenated states of agents in the neighborhood centered on agent $i \in [n]$, interchangeably moving forward.
Consider the first-order dynamics for a single agent~$i$
\begin{equation} \label{eq:dynamics}
    \dot{x}_i = f_i(x_i) + g_i(x_i)u_i
\end{equation}
where $u_i \in \mathcal{U}_i \subset \mathbb{R}^{M_i}$ is some form of affine acceleration controller for agent $i$.  
Let $u_i^f(x_i, x_{\mathcal{N}_i})$ be a distributed feedback control law that induces some formation behavior. We can treat these formation dynamics  as part of the natural dynamics of the system where $u_i^f(x_i, x_{\mathcal{N}_i})$ is modified by some safety filter control law as
\begin{equation*}
    \dot{x}_i = f_i(x_i) + g_i(x_i)(u_i^f(x_i, x_{\mathcal{N}_i})-u_i^s)
\end{equation*}
where $u_i^s$ is a modification to the formation control signal to ensure agent safety. We can then rewrite the dynamics in \eqref{eq:dynamics} as
\begin{equation} \label{eq:formation_dynamics}
    \dot{x}_i = \bar{f}_i(x_i, x_{\mathcal{N}_i}) + \bar{g}_i(x_i)u_i^s
\end{equation}
where
\begin{equation}
    \bar{f}_i(x_i, x_{\mathcal{N}_i}) = f_i(x_i) + g_i(x_i)u_i^f(x_i, x_{\mathcal{N}_i})
\end{equation}
and
\begin{equation}
    \bar{g}_i(x_i) = -g_i(x_i).
\end{equation}

We assume each agent has positional safety constraints with respect to a given obstacle $o \in \mathcal{O}_i(t)$, where $\mathcal{O}_i(t)$ is the set of identifiers for obstacles within the sensing range of agent $i$ at time $t$. Note that other agents within the sensing range of agent $i$ at time $t$ will also be included in $\mathcal{O}_i(t)$, which does not change the computation of the first-order safety condition. However, if agents in $\mathcal{O}_i(t)$ are also in $\mathcal{N}_i$, then the expression for the second-order safety condition for inter-agent collision avoidance with respect to the defined formation dynamics, which will be explained in further detail in Section~\ref{sec:safe_with_colab}, must also incorporate partial derivatives with respect to $x_j$ for $j \in \mathcal{N}_i$. For convenience, we drop the notation of time dependence on $\mathcal{O}_i$ moving forward. We define the set of viable safety filter control actions as
\begin{equation}
    \mathcal{U}_i^s(x) = \{ u_i^s \in \mathcal{U}_i: u_i^f(x) - u_i^s \in \mathcal{U}_i \}.
\end{equation}
In this paper, we assume safety conditions for each agent are defined with respect to the relative position of agents to obstacles. Therefore, since control is implemented through acceleration, we construct a higher-order barrier function for each agent $i$ with respect to a given obstacle $o$ as follows
\begin{equation}\label{eq:second_order_BF}
    \begin{aligned}
        \phi_{i,o}^0(x_i, x_o) &= h_i(x_i, x_o) \\
        \phi_{i,o}^1(x_i, x_o) &= \dot{\phi}_{i,o}^0(x_i, x_o) + \alpha_i^0(\phi_{i,o}^0(x_i, x_o))
    \end{aligned}
\end{equation}
where $x_o$ is the state of obstacle $o \in \mathcal{O}_i$. These functions then define the corresponding safety constraint sets
\begin{equation} \label{eq:HO_obst_sets}
    \begin{aligned}
        \mathcal{C}_{i,o}^1 &:= \{ (x_i, x_o) \in \mathbb{R}^{N_i} \times \mathbb{R}^{N_o}: \phi_{i,o}^0(x_i, x_o) \geq 0 \} \\
        \mathcal{C}_{i,o}^2 &:= \{ (x_i, x_o) \in \mathbb{R}^{N_i} \times \mathbb{R}^{N_o}: \phi_{i,o}^1(x_i, x_o) \geq 0 \}.
    \end{aligned}
\end{equation}

\noindent Given the definition of these constraint sets, we can define an \textit{agent-level control barrier function} and subsequent forward invariant properties as follows.

\begin{definition}
    We have $h_{i,o}(x_i, x_o)$ is an \textit{agent-level control barrier function} (aCBF) if for all $(x_i, x_o) \in \mathcal{C}_{i,o}^1 \cap \mathcal{C}_{i,o}^2$ and $t \in \mathcal{T}$ there exists a class-$\mathcal{K}$ function $\alpha_i^1$ and $u_i^s \in \mathcal{U}_i^s(x)$ such that
    \begin{equation}\label{eq:first_order_safety_cond}
        \dot{\phi}_{i,o}^1(x, x_o, u_i^f(x), u_i^s) + \alpha_i^1(\phi_{i,o}^1(x_i, x_o)) \geq 0.
    \end{equation}
\end{definition}
\noindent
We see that \eqref{eq:first_order_safety_cond} characterizes the first-order safety condition for agent $i$ with respect to obstacle $o$ since the acceleration control input appears in the second derivative of $h_{i,o}$, which is computed in $\dot{\phi}_{i,o}^1$. This barrier function definition naturally leads to the following result on agent-level safety.

\begin{lemma}\label{lem:aCBF}
    If $h_{i,o}(x_i, x_o)$ is an aCBF, then $\mathcal{C}_{i,o}^1 \cap \mathcal{C}_{i,o}^2$ is forward invariant for all $t \in \mathcal{T}$.
\end{lemma}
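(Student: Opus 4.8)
The plan is to recognize that the aCBF definition is precisely a relative-degree-two instance of the high-order barrier function construction in \eqref{eq:HO_funcs}--\eqref{eq:HO_sets}, so that the statement reduces to Lemma~\ref{lem:NBF}. Concretely, identifying $\psi_i^0 := \phi_{i,o}^0 = h_{i,o}$ and $\psi_i^1 := \phi_{i,o}^1$ and taking $k = 2$, the two constraint sets $\mathcal{C}_{i,o}^1$ and $\mathcal{C}_{i,o}^2$ of \eqref{eq:HO_obst_sets} coincide with the sets $\mathcal{C}_i^1, \mathcal{C}_i^2$ of \eqref{eq:HO_sets}, with the differentiable class-$\mathcal{K}$ functions $\alpha_i^0, \alpha_i^1$ playing the role of the $\alpha_i^r$ in the recursion. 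Thus it suffices to exhibit a closed-loop vector field, obtained by fixing a safe control, under which $h_{i,o}$ satisfies the node-level barrier function conditions of Definition~1 on $\mathcal{C}_{i,o}^1 \cap \mathcal{C}_{i,o}^2$.

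First I would fix a selection $u_i^s(x) \in \mathcal{U}_i^s(x)$ that realizes the inequality \eqref{eq:first_order_safety_cond} guaranteed by the aCBF hypothesis at every point of $\mathcal{C}_{i,o}^1 \cap \mathcal{C}_{i,o}^2$, and substitute it into the formation dynamics \eqref{eq:formation_dynamics}. Because the acceleration input enters only through $\ddot{h}_{i,o}$, which in turn appears inside $\dot{\phi}_{i,o}^1$, the resulting closed-loop system renders $\phi_{i,o}^1$ differentiable along trajectories, and \eqref{eq:first_order_safety_cond} states exactly that the terminal HOBF function $\psi_i^2 = \dot{\phi}_{i,o}^1 + \alpha_i^1(\phi_{i,o}^1)$ is nonnegative on the intersection. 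This verifies every requirement of Definition~1 with $k = 2$, so Lemma~\ref{lem:NBF} applied to the closed-loop field immediately yields forward invariance of $\mathcal{C}_{i,o}^1 \cap \mathcal{C}_{i,o}^2$.

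The main obstacle is ensuring that this reduction to the autonomous, uncontrolled setting of Lemma~\ref{lem:NBF} is legitimate. Two points require care. First, the aCBF only guarantees the \emph{existence} of a safe $u_i^s$ pointwise; to obtain a genuine forward-invariance (Nagumo/comparison) argument I need a selection that is at least locally Lipschitz so that the closed-loop ODE \eqref{eq:formation_dynamics} admits unique solutions, which in turn asks the feasible set $\mathcal{U}_i^s(x)$ to vary regularly in $x$. Second, since the obstacle state $x_o$ and the membership of $\mathcal{O}_i(t)$ may be time-varying, the barrier is effectively nonautonomous, so I would either absorb $x_o$ into an augmented state with known dynamics or invoke Lemma~\ref{lem:NBF} in its time-varying form; this is precisely where the ``for all $t \in \mathcal{T}$'' qualifier does its work. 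Finally, I would confirm that the expansion of $\dot{\phi}_{i,o}^1$ is carried out consistently with \eqref{eq:formation_dynamics}, including the partial derivatives with respect to $x_j$ for $j \in \mathcal{N}_i$ whenever a neighbor is simultaneously an obstacle, so that the certified safety condition matches the one appearing in the aCBF hypothesis.
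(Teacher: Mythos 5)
Your proposal is correct and follows essentially the same route as the paper: both arguments fix the control action whose existence the aCBF definition guarantees, observe that the resulting closed-loop system makes $h_{i,o}$ satisfy the high-order barrier conditions (so that $\mathcal{C}_{i,o}^2$ is rendered invariant), and then invoke Lemma~\ref{lem:NBF} to conclude forward invariance of $\mathcal{C}_{i,o}^1 \cap \mathcal{C}_{i,o}^2$. Your version is in fact somewhat more careful than the paper's, which silently skips the regularity-of-selection and time-dependence issues you flag.
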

\begin{proof}
    If $h_{i,o}$ is an aCBF, then $\exists u_i^s \in \mathcal{U}_i^s$ such that
    \begin{equation*}
        \dot{\phi}_{i,o}^1(x, x_o, u_i^f(x), u_i^s) + \alpha_i^1(\phi_{i,o}^1(x_i, x_o)) \geq 0
    \end{equation*}
    for all $(x_i, x_o) \in \mathcal{C}_{i,o}^1 \cap \mathcal{C}_{i,o}^2$. Thus, as $\phi_{i,o}^1(x_i, x_o)$ approaches zero there will be some $u_i^s$ such that $\dot{\phi}_{i,o}^1(x, x_o, u_i^f(x), u_i^s) \geq 0$. Therefore, if $(x_i(t_0), x_o(t_0)) \in \mathcal{C}_{i,o}^2$ then $\mathcal{C}_{i,o}^2$ is forward invariant for all $t \in \mathcal{T}$. By Lemma~\ref{lem:NBF}, it naturally follows that is $\mathcal{C}_{i,o}^1 \cap \mathcal{C}_{i,o}^2$ forward invariant.
\end{proof}

With agent-level control barrier functions defined, we are now prepared to state our formal problem for this work, which is defined in our notation as follows: 
\begin{equation} \label{eq:problem_statement}
\small
    \begin{aligned}
        \min_{u_i^s \in \mathcal{U}_i^s(x)} \quad & \frac{1}{2}{\left\Vert u_i^f(x_i, x_{\mathcal{N}_i}) - u_i^s \right\Vert}_2^2 \\
        \text{s.t.} \quad & \dot{\phi}_{i,o}^1\left(x, x_o, u_i^f, u_i^s \right) + \alpha_i^1\left(\phi_{i,o}^1(x_i, x_o)\right) \geq 0 \\
        & \forall i \in [n], \; \forall o \in \mathcal{O}_i.
    \end{aligned}
\end{equation}

\noindent
In words, we aim to provide a control policy that minimally alters the prescribed distributed formation control signal such that the defined safety conditions for obstacle avoidance are satisfied for all agents in the formation. 

We present our solution to this problem in the following sections, where in Section~\ref{sec:safe_with_colab} we define a barrier function candidate based on the relative positions of agents to obstacles and leverage previous work in \cite{butler2023distributed} to define a second-order safety condition that includes the effect of neighbor's formation dynamics on agent safety. We then modify the \textit{collaborative safety algorithm} from \cite{butler2023distributed} in Section~\ref{sec:colab_algo} to account for communicating safety needs with multiple safety constraints and provide a method for computing the maximum agent safety capability in Section~\ref{sec:max_cap_comp}, culminating in a modified collaborative safety algorithm for formation control in Section~\ref{sec:colab_safety_thm}. We then demonstrate our collaborative safety algorithm in simulation on a distributed formation controller for two-dimensional agents in Section~\ref{sec:application}.

\section{Safe Formation Control with Collaboration} \label{sec:safe_with_colab}

We now present a method by which each agent can communicate safety needs to its neighboring agents to achieve collective safety in a distributed manner.
We define a relative position safety constraint for each agent with respect to a given obstacle as follows. Let $p_i$ and $p_o$ be the position of agent $i \in [n]$ and obstacle $o \in \mathcal{O}_i$, respectively. We define a position based safety constraint as 
\begin{equation} \label{eq:rel_dist_barrierfunc}
    h_{i,o}(x_i, x_o) = {\Vert p_i - p_o \Vert}^2_2 - r_{i,o}^2
\end{equation}
where $r_{i,o} \in \mathbb{R}$ is the minimum distance agent $i$ should maintain from obstacle $o$. Assuming control inputs on the acceleration of agent $i$, we use the second-order barrier functions candidate from \eqref{eq:second_order_BF} to define the first derivative safety condition
\begin{equation}
    \dot{\phi}_{i,o}^1(x, x_o, u_i^s) = \mathcal{L}_{\bar{f}_i}\phi_{i,o}^1(x, x_o) + \mathcal{L}_{\bar{g}_i}\phi_{i,o}^1(x_i, x_o) u_i^s.
\end{equation}
If we define the next high-order barrier function as
\begin{equation} \label{eq:phi2}
    \phi_{i,o}^2(x, x_o, u_i^s) = \dot{\phi}_{i,o}^1(x, x_o, u_i^f, u_i^s) + \alpha_i^1(\phi_{i,o}^1(x_i, x_o))
\end{equation}
and
\begin{equation} \label{eq:sec_order_safety_cond}
    \begin{aligned}
        \Phi_{i,o}(x, x_o, u_i^s, u_{\mathcal{N}_i}^s) = \dot{\phi}_{i,o}^2(x, x_o, u_i^s, u_{\mathcal{N}_i}^s)
        + \alpha_i^2(\phi_{i,o}^2(x, x_o, u_i^s)),
    \end{aligned}
\end{equation}
we begin to see neighbor dynamics and the subsequent effect of neighbor control actions in the higher-order derivative expressions. A more detailed discussion on the derivation of \eqref{eq:sec_order_safety_cond} may be found in \cite{butler2023distributed}; however, for our purposes, we separate \eqref{eq:sec_order_safety_cond} into terms that are affected by neighbor control and those that are not affected by neighbor control as follows
\begin{equation} \label{eq:sec_order_safety_cond_separated}
    \Phi_{i,o}(x, x_o, u_i^s, u_{\mathcal{N}_i}^s) = \sum_{j \in \mathcal{N}_i} a_{ij,o}(x, x_o)u_j^s + c_{i,o}(x, x_o, u_i^s)
\end{equation}
where
\begin{equation} \label{eq:neighbor_effects}
    a_{ij,o}(x, x_o) = \mathcal{L}_{\bar{g}_j}\mathcal{L}_{\bar{f}_i} \phi_{i,o}^1(x, x_o)
\end{equation}
is the effect that modified control actions $u_j^s$ taken by agent $j \in \mathcal{N}_i$ have on the formation dynamics and the subsequent safety condition of agent $i$ with respect to obstacle $o \in \mathcal{O}_i$ and $c_{i,o}(x, x_o, u_i^s)$ collects all other terms including those that are affected by its own control actions $u_i^s$. Note that if neighbors in $\mathcal{N}_i$ also implement control through acceleration inputs then it is possible for $a_{ij,o}=\mathbf{0}^{M_i}$ since control inputs for neighbors do not appear until the next order barrier function. In this case, we can circumvent the need to compute unnecessary derivatives by having agents communicate safety needs in terms of velocity constraints, which may be used to approximate acceleration constraints locally for each agent. We will give an example of how this approximation may be done in practice in Section~\ref{sec:application}.

To compute $c_{i,o}$ more explicitly, we make the following assumption,
\begin{assumption} \label{assume:class_K_scalar}
    Let $\alpha_i^1(z) := \alpha_i^1 z$ and $\alpha_i^2(z) := \alpha_i^2 z$ where $z \in \mathbb{R}^{N_i}$ and $\alpha_i^1, \alpha_i^2 \in \mathbb{R}_{> 0}$ and define $\beta_i = \alpha_i^1 +  \alpha_i^2$. 
\end{assumption}

\noindent
This assumption yields the full expression of $c_{i,o}$ as
\begin{equation} \label{eq:safety_capability}
\small
    \begin{aligned}
        c_{i,o}(x, x_o, u_i^s) &= \sum_{j \in \mathcal{N}_i} \mathcal{L}_{\bar{f}_j}\mathcal{L}_{\bar{f}_i} \phi_{i,o}^1(x, x_o) + \mathcal{L}_{\bar{f}_i}^2 \phi_{i,o}^1(x, x_o)
        + \alpha_i^1 \alpha_i^2 \phi_{i,o}^1(x_i, x_o) + \beta_i \mathcal{L}_{\bar{f}_i}\phi_{i,o}^1(x_i, x_o) + \mathcal{L}_{\bar{g}_i}\phi_{i,o}^1(x_i, x_o)\dot{u}_i^s \\
        & \quad + u_i^{s\top}\mathcal{L}_{\bar{g}_i}^2 \phi_{i,o}^1(x, x_o) u_i^s + \beta_i \mathcal{L}_{\bar{g}_i} \phi_{i,o}^1(x_i, x_o) u_i^s
        + \left[ \mathcal{L}_{\bar{f}_i} \mathcal{L}_{\bar{g}_i} \phi_{i,o}^1(x_i, x_o)^\top +\mathcal{L}_{\bar{g}_i} \mathcal{L}_{\bar{f}_i} \phi_{i,o}^1(x_i, x_o) \right]u_i^s.
    \end{aligned}
\end{equation}

\noindent
We may interpret \eqref{eq:safety_capability} as the total safety capability of agent~$i$ with respect to avoidance of obstacle $o \in \mathcal{O}_i$, where if $c_{i,o}(x, x_o, u_i^s) \geq 0$, then agent $i$ is capable of remaining safe given $u_i^s$ (assuming no negative action effects of neighbors). Conversely, if $c_{i,o}(x, x_o, u_i^s) < 0$, then agent $i$ is incapable of remaining safe given $u_i^s$ and will require assistance from its neighbor's actions.
Note that in the case where $x_o = x_j$ for some $j \in \mathcal{N}_i$, the computation of \eqref{eq:sec_order_safety_cond} must also incorporate the dependence of $x_j$ in $\Vert p_i - p_j \Vert$ when evaluating the Lie derivatives in both \eqref{eq:neighbor_effects} and \eqref{eq:safety_capability}.
Given our definition of a subsequent higher-order barrier function in \eqref{eq:phi2}, we define another safety constraint set as
\begin{equation} \label{eq:HO_neighbor_set}
    \begin{aligned}
        \mathcal{C}_{i,o}^3 := & \big\{ (x_i, x_o) \in \mathbb{R}^{N_i} \times \mathbb{R}^{N_o}: \exists u_i^s \in \mathcal{U}_i^s \text{ s.t. } 
        \dot{\phi}_{i,o}^1 (x, x_o, u_i^f, u_i^s) + \alpha_i^1\left(\phi_{i,o}^1(x_i, x_o)\right) \geq 0 \big\}
    \end{aligned}
\end{equation}
which collects all states where agent $i$ is capable of maintaining its first-order safety condition under the influence of its induced formation dynamics. Given these definitions, we are prepared to define a collaborative control barrier function as follows.
\begin{definition}
    Let $\mathcal{C}_{i,o}^1$, $\mathcal{C}_{i,o}^2$, and  $\mathcal{C}_{i,o}^3$ be defined by \eqref{eq:HO_obst_sets} and \eqref{eq:HO_neighbor_set}. We have that $h_{i,o}$ is a \textit{collaborative control barrier function} (CCBF) for node $i \in [n]$ if $h_{i,o} \in C^3$ and $\forall (x_i,x_o) \in \mathcal{C}_{i,o}^1 \cap \mathcal{C}_{i,o}^2 \cap \mathcal{C}_{i,o}^3$ and $\forall t \in \mathcal{T}$ there exists $(u_i^s, u_{\mathcal{N}_i}^s) \in \mathcal{U}_i^s \times \mathcal{U}_{\mathcal{N}_i}^s$ such that 
    \begin{equation} \label{eq:CCBF_cond}
        \Phi_{i,o}(x, x_o, u_i^s, u_{\mathcal{N}_i}^s) \geq 0, \; \forall o \in \mathcal{O}_i.
    \end{equation}
\end{definition}

\begin{lemma} \label{lem:CCBF}
    Given a distributed multi-agent system defined by \eqref{eq:formation_dynamics} and constraint sets defined by \eqref{eq:HO_obst_sets} and \eqref{eq:HO_neighbor_set}, $\bigcap_{o \in \mathcal{O}_i}\mathcal{C}_{i,o}^1 \cap \mathcal{C}_{i,o}^2 \cap \mathcal{C}_{i,o}^3$ is forward invariant $\forall t \in \mathcal{T}$ if $h_{i,o}$ is a CCBF for all $o \in \mathcal{O}_i$. 
\end{lemma}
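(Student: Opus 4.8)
The plan is to mirror the cascade argument used for Lemma~\ref{lem:aCBF}, but now carried out at the third order and with the neighbor-coupling terms rendered benign by the CCBF property. The key observation is that $\mathcal{C}_{i,o}^1, \mathcal{C}_{i,o}^2, \mathcal{C}_{i,o}^3$ are precisely the super-level sets of the HOBF chain $\phi_{i,o}^0 = h_{i,o}$, $\phi_{i,o}^1 = \dot\phi_{i,o}^0 + \alpha_i^0(\phi_{i,o}^0)$, and $\phi_{i,o}^2 = \dot\phi_{i,o}^1 + \alpha_i^1(\phi_{i,o}^1)$, with $\Phi_{i,o} = \dot\phi_{i,o}^2 + \alpha_i^2(\phi_{i,o}^2)$ the top-level condition. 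So proving the lemma amounts to showing that \eqref{eq:CCBF_cond} lets $h_{i,o}$ act as an NBF for the collaboratively-controlled system, after which Lemma~\ref{lem:NBF} delivers forward invariance of $\mathcal{C}_{i,o}^1 \cap \mathcal{C}_{i,o}^2 \cap \mathcal{C}_{i,o}^3$ for each fixed $o$, and a final step intersects over $o \in \mathcal{O}_i$.

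First I would fix an obstacle $o \in \mathcal{O}_i$ and argue forward invariance of $\mathcal{C}_{i,o}^3 = \{\phi_{i,o}^2 \geq 0\}$. Since $h_{i,o}$ is a CCBF, for every state in the intersection and every $t \in \mathcal{T}$ there is a joint action $(u_i^s, u_{\mathcal{N}_i}^s)$ with $\Phi_{i,o} \geq 0$, i.e. $\dot\phi_{i,o}^2 + \alpha_i^2(\phi_{i,o}^2) \geq 0$. Evaluating this on the boundary $\phi_{i,o}^2 = 0$ forces $\dot\phi_{i,o}^2 \geq 0$, so a trajectory starting in $\mathcal{C}_{i,o}^3$ cannot cross $\phi_{i,o}^2 = 0$; this is the same Nagumo/comparison-lemma conclusion used in the proof of Lemma~\ref{lem:aCBF}, and it establishes forward invariance of $\mathcal{C}_{i,o}^3$.

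Next I would cascade downward. Membership in $\mathcal{C}_{i,o}^3$ means $\phi_{i,o}^2 \geq 0$, which is exactly $\dot\phi_{i,o}^1 + \alpha_i^1(\phi_{i,o}^1) \geq 0$; by the same comparison argument any trajectory remaining in $\mathcal{C}_{i,o}^3$ keeps $\phi_{i,o}^1 \geq 0$, so $\mathcal{C}_{i,o}^2$ is forward invariant, and repeating once more keeps $\phi_{i,o}^0 = h_{i,o} \geq 0$, rendering $\mathcal{C}_{i,o}^1$ forward invariant. This is precisely the content of Lemma~\ref{lem:NBF} applied to the HOBF chain, so I would simply invoke it once the top-level invariance is in hand, concluding that $\mathcal{C}_{i,o}^1 \cap \mathcal{C}_{i,o}^2 \cap \mathcal{C}_{i,o}^3$ is forward invariant for the fixed $o$.

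Finally, I would intersect over obstacles, and this is the step I expect to be the main obstacle. Invariance of $\bigcap_{o \in \mathcal{O}_i}$ does not follow from per-obstacle invariance under arbitrary controllers, since an action keeping agent $i$ safe from one obstacle could violate the constraint for another. What rescues the argument is that \eqref{eq:CCBF_cond} demands a \emph{single} joint action $(u_i^s, u_{\mathcal{N}_i}^s)$ satisfying $\Phi_{i,o} \geq 0$ for \emph{all} $o \in \mathcal{O}_i$ simultaneously, so the boundary argument applies on every active constraint at once and the finite intersection of forward-invariant sets is again forward invariant. I would also flag that the neighbor-coupling terms $\sum_{j \in \mathcal{N}_i} a_{ij,o}(x,x_o) u_j^s$ appearing in $\Phi_{i,o}$ are exactly what force a cooperative rather than unilateral existence statement; here they are absorbed into the assumed CCBF property, with the distinct question of actually computing a feasible distributed $(u_i^s, u_{\mathcal{N}_i}^s)$ deferred to the collaborative-safety algorithm of the later sections.
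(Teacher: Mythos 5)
Your proposal is correct and follows essentially the same route as the paper's proof: a boundary (Nagumo/comparison) argument at the top level showing that \eqref{eq:CCBF_cond} keeps $\phi_{i,o}^2 \geq 0$, a cascade down the high-order chain to $\phi_{i,o}^1 \geq 0$ and $h_{i,o} \geq 0$ (done inline in the paper where you invoke Lemma~\ref{lem:NBF}), and a final intersection over $o \in \mathcal{O}_i$. Your explicit observation that the intersection step works only because \eqref{eq:CCBF_cond} demands a \emph{single} joint action $(u_i^s, u_{\mathcal{N}_i}^s)$ valid for all obstacles simultaneously is a point the paper passes over with ``since these same arguments hold $\forall o \in \mathcal{O}_i$,'' so your version is, if anything, slightly more careful there.
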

\begin{proof}
    The results of this lemma are a direct extension of Theorem 2 in \cite{butler2023distributed}, where if $h_{i,o}$ is a CCBF for a given obstacle $o \in \mathcal{O}_i$ then $\exists (u_i^s, u_{\mathcal{N}_i}^s) \in \mathcal{U}_i^s \times \mathcal{U}_{\mathcal{N}_i}^s$ such that \eqref{eq:CCBF_cond} holds.
    Since $u_i^s$ appears in both $\Phi_{i,o}(x, x_o, u_i^s, u_{\mathcal{N}_i}^s)$ and $\phi_{i,o}^2(x, x_o, u_i^s)$, we must show that if $(x_i,x_o) \in \mathcal{C}_{i,o}^1 \cap \mathcal{C}_{i,o}^2 \cap \mathcal{C}_{i,o}^3$ and $\Phi_{i,o}(x, x_o, u_i^s, u_{\mathcal{N}_i}^s) \geq 0$ for some $u_i^s \in \mathcal{U}_i^s$, then $\phi_{i,o}^2(x, x_o, u_i^s) \geq 0$ also. If \eqref{eq:CCBF_cond} holds for all $(x_i,x_o) \in \mathcal{C}_{i,o}^1 \cap \mathcal{C}_{i,o}^2 \cap \mathcal{C}_{i,o}^3$, then for all $x_i, x_{\mathcal{N}_i}, x_o$ and $u_i^s \in \mathcal{U}_i^s$ where $\phi_{i,o}^2(x, x_o, u_i^s) = 0$, there exists $u_{\mathcal{N}_i}^s \in \mathcal{U}_{\mathcal{N}_i}^s$ such that $\dot{\phi}_{i,o}^2(x, x_o, u_i^s, u_{\mathcal{N}_i}^s) \geq 0$. Thus, we have that $\phi_{i,o}^2(x, x_o, u_i^s) \geq 0, \forall (x_i,x_o) \in \mathcal{C}_{i,o}^1 \cap \mathcal{C}_{i,o}^2 \cap \mathcal{C}_{i,o}^3$, which implies $\phi_i^1(x_i, x_o) \geq 0, \forall (x_i,x_o) \in \mathcal{C}_{i,o}^1 \cap \mathcal{C}_{i,o}^2 \cap \mathcal{C}_{i,o}^3$. Therefore, we have that $\mathcal{C}_{i,o}^1 \cap \mathcal{C}_{i,o}^2 \cap \mathcal{C}_{i,o}^3$ is forward invariant. Further, since these same arguments hold $\forall o \in \mathcal{O}_i$, 
    it directly follows that $\bigcap_{o \in \mathcal{O}_i}\mathcal{C}_{i,o}^1 \cap \mathcal{C}_{i,o}^2 \cap \mathcal{C}_{i,o}^3$ is forward invariant.
\end{proof}

With set invariance defined with respect to neighbor influence, we can leverage these properties to construct an algorithm to implement collaborative safety through rounds of communication between neighbors.

\subsection{Multi-Agent Collaboration Through Communication} \label{sec:colab_algo}
In this section, we introduce the \textit{collaborative safety algorithm}, modified from our previous work in \cite{butler2023distributed}. The major additional contribution to the algorithm in this work is the additional handling of multiple safety constraints from each agent, which requires a new definition of maximum safety capability with respect to multiple safety conditions. For the formation control problem scenario, we make the following assumptions.
\begin{assumption} \label{assume:u_i_piecewise_constant}
    Let $u_i^s(t)$ be piecewise constant $\forall t \in \mathcal{T}$.
\end{assumption}
\noindent
This assumption includes zero-hold controllers that implement control decisions in a bang-bang fashion, allowing us to set $\dot{u}_i=0$ in the analysis.
\begin{assumption} \label{assume:Lgi^2_is_0}
    Let $\mathcal{L}_{\bar{g}_i}^2 \phi_{i,o}^1(x_i, x_o) = \mathbf{0}^{M_i \times M_i}, \forall o \in \mathcal{O}_i$.
\end{assumption}
\noindent
In words, we assume that the control exerted by agent $i$ does not have a dynamic relationship with its ability to exert control (e.g., the robot's movement is implemented identically no matter its position in a defined coordinate system). Since each agent may be actively avoiding multiple obstacles, we may compute the vector describing the second-order safety condition with respect to each obstacle under Assumptions~\ref{assume:class_K_scalar}-\ref{assume:Lgi^2_is_0} as follows
\begin{equation} \label{eq:capability_vector_expanded}
    \begin{aligned}
        \Phi_i &=
        \underbrace{
        \begin{bmatrix}
            a_{ij_1,o_1}(x, x_{o_1}) & \cdots & a_{ij_{|\mathcal{N}_i|},o_1}(x, x_{o_1}) \\
            \vdots & \ddots & \vdots \\
            a_{ij_1,o_{K}}(x, x_{o_K}) & \cdots & a_{ij_{|\mathcal{N}_i|},o_{K}}(x, x_{o_K})
        \end{bmatrix}
        }_{A_i}
        \begin{bmatrix}
            u_{j_1}^s \\ \vdots \\ u_{j_{|\mathcal{N}_i|}}^s
        \end{bmatrix}
        +
        \underbrace{
        \begin{bmatrix}
            \mathcal{L}_{\bar{f}_i} \mathcal{L}_{\bar{g}_i} \phi_{io_{1}}^{1\top} + \mathcal{L}_{\bar{g}_i} \mathcal{L}_{\bar{f}_i} \phi_{io_{1}}^1 + \beta_i \mathcal{L}_{\bar{g}_i} \phi_{io_{1}}^1 \\
            \vdots \\
            \mathcal{L}_{\bar{f}_i} \mathcal{L}_{\bar{g}_i} \phi_{io_{K}}^{1\top} + \mathcal{L}_{\bar{g}_i} \mathcal{L}_{\bar{f}_i} \phi_{io_{K}}^1 + \beta_i \mathcal{L}_{\bar{g}_i} \phi_{io_{K}}^1
        \end{bmatrix}
        }_{B_i}
        u_i^s 
        \\
        & \quad +
        \underbrace{
        \begin{bmatrix}
            \sum_{j \in \mathcal{N}_i} \mathcal{L}_{\bar{f}_j}\mathcal{L}_{\bar{f}_i} \phi_{io_{1}}^1 + \mathcal{L}_{\bar{f}_i}^2 \phi_{io_{1}}^1 + \alpha_i^1 \alpha_i^2 \phi_{io_{1}}^1 + \beta_i \mathcal{L}_{\bar{f}_i}\phi_{io_{1}}^1 \\
            \vdots \\
            \sum_{j \in \mathcal{N}_i} \mathcal{L}_{\bar{f}_j}\mathcal{L}_{\bar{f}_i} \phi_{io_{K}}^1 + \mathcal{L}_{\bar{f}_i}^2 \phi_{io_{K}}^1 + \alpha_i^1 \alpha_i^2 \phi_{io_{K}}^1 + \beta_i \mathcal{L}_{\bar{f}_i}\phi_{io_{K}}^1
        \end{bmatrix}
        }_{q_i}
    \end{aligned}
\end{equation}
where $K = |\mathcal{O}_i(t)|$ is the number of obstacles within the sensing range of agent $i$ at time $t$. Note our early remark that other agents in the formation within the sensing range of agent $i$ will also be included in this vector to account for inter-agent collision avoidance. Further, note that the length of this vector is time-varying according to $|\mathcal{O}_i(t)|$. We can express \eqref{eq:capability_vector_expanded} more compactly as
\begin{equation} \label{eq:multi-obs-compact}
    \Phi_i(x, x_o, \forall o \in \mathcal{O}_i) = A_i u_{\mathcal{N}_i}^s + B_i u_i^s + q_i
\end{equation}
where $A_i \in \mathbb{R}^{K \times M_{\mathcal{N}_i}}$, with $M_{\mathcal{N}_i} = \sum_{j \in \mathcal{N}_i} M_j$, $B_i \in \mathbb{R}^{K \times M_i}$, and $q_i \in \mathbb{R}^{K}$.
Under \eqref{eq:capability_vector_expanded}, we have the following result on its relationship to the problem stated in \eqref{eq:problem_statement}. 
\begin{lemma} \label{lem:equivalent_solution}
    Under Assumptions~\ref{assume:class_K_scalar}-\ref{assume:Lgi^2_is_0}, any set of agent control inputs  $u_i^s \in \mathcal{U}_i^s, \forall i \in [n]$ that satisfies
    \begin{equation} \label{eq:lem_compact_condition}
       \Phi_i(x, x_o, u_i^s, u_{\mathcal{N}_i}^s, \forall o \in \mathcal{O}_i) \geq \mathbf{0}; \forall i \in [n]
    \end{equation}
    from \eqref{eq:multi-obs-compact} are also a solution to 
    \begin{equation}\label{eq:lem_prob_statement}
        \dot{\phi}_{i,o}^1 (x, x_o, u_i^f, u_i^s) + \alpha_i^1\left(\phi_{i,o}^1(x_i, x_o)\right) \geq 0; \forall i \in [n], \forall o \in \mathcal{O}_i
    \end{equation}
    from \eqref{eq:problem_statement}.
\end{lemma}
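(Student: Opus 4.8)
The plan is to reduce the claim to the forward-invariance machinery already established, after first checking that the compact matrix form \eqref{eq:multi-obs-compact} is an exact rewriting of the stacked second-order safety conditions under the three standing assumptions. The first observation is that the constraint \eqref{eq:lem_prob_statement} appearing in the original problem is, by the definition in \eqref{eq:phi2}, nothing other than $\phi_{i,o}^2(x, x_o, u_i^s) \geq 0$; and the compact condition \eqref{eq:lem_compact_condition}, read row by row, is exactly the collection $\{\Phi_{i,o}(x, x_o, u_i^s, u_{\mathcal{N}_i}^s) \geq 0 : o \in \mathcal{O}_i\}$. So the lemma amounts to showing that $\Phi_{i,o} \geq 0$ for every $o \in \mathcal{O}_i$ forces $\phi_{i,o}^2 \geq 0$ for every $o$, for each $i \in [n]$.

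First I would verify the bookkeeping that identifies \eqref{eq:capability_vector_expanded} with the term-separated form \eqref{eq:sec_order_safety_cond_separated}-\eqref{eq:safety_capability}. This is where Assumptions~\ref{assume:class_K_scalar}-\ref{assume:Lgi^2_is_0} enter: Assumption~\ref{assume:u_i_piecewise_constant} sets $\dot{u}_i^s = 0$, eliminating the $\mathcal{L}_{\bar{g}_i}\phi_{i,o}^1 \dot{u}_i^s$ term; Assumption~\ref{assume:Lgi^2_is_0} sets $\mathcal{L}_{\bar{g}_i}^2 \phi_{i,o}^1 = \mathbf{0}$, eliminating the quadratic $u_i^{s\top}\mathcal{L}_{\bar{g}_i}^2\phi_{i,o}^1 u_i^s$ term; and Assumption~\ref{assume:class_K_scalar} linearizes the class-$\mathcal{K}$ maps, producing the $\alpha_i^1\alpha_i^2\phi_{i,o}^1$ and $\beta_i$ contributions. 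Collecting the neighbor-dependent coefficients \eqref{eq:neighbor_effects} into the rows of $A_i$, the $u_i^s$-coefficients into the rows of $B_i$, and the remaining drift terms into $q_i$ shows that the $o$th component of $A_i u_{\mathcal{N}_i}^s + B_i u_i^s + q_i$ equals $\Phi_{i,o}$. Hence satisfying \eqref{eq:lem_compact_condition} is literally satisfying $\Phi_{i,o} \geq 0$ for each obstacle.

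The final step reuses the argument from the proof of Lemma~\ref{lem:CCBF}. Fixing the set of inputs that satisfies the compact condition, each component gives $\dot{\phi}_{i,o}^2 + \alpha_i^2(\phi_{i,o}^2) \geq 0$, with the same $u_i^s$ appearing in $\phi_{i,o}^2$ as in $\Phi_{i,o}$. By the Nagumo/comparison argument underlying Lemma~\ref{lem:NBF}, whenever $\phi_{i,o}^2$ reaches zero its derivative is nonnegative, so starting from the safe set $\mathcal{C}_{i,o}^1 \cap \mathcal{C}_{i,o}^2 \cap \mathcal{C}_{i,o}^3$ the value $\phi_{i,o}^2$ cannot become negative; that is, $\phi_{i,o}^2 \geq 0$, which is exactly \eqref{eq:lem_prob_statement}. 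Since this holds for every $o \in \mathcal{O}_i$ and every $i \in [n]$, the chosen inputs solve the original constraints.

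I expect the main obstacle to lie less in the invariance conclusion than in the care needed to connect it correctly: the pointwise inequality $\Phi_{i,o} \geq 0$ does \emph{not} by itself give $\phi_{i,o}^2 \geq 0$, since under the linear $\alpha_i^2$ of Assumption~\ref{assume:class_K_scalar} one can have $\Phi_{i,o} = \dot{\phi}_{i,o}^2 + \alpha_i^2 \phi_{i,o}^2 > 0$ while $\phi_{i,o}^2 < 0$. The conclusion therefore genuinely relies on the forward-invariance/initial-condition argument together with the fact that the same $u_i^s$ is shared between the two expressions, precisely the subtlety flagged in the proof of Lemma~\ref{lem:CCBF}; the only new labor here is confirming the term-by-term match of \eqref{eq:capability_vector_expanded} to $\Phi_{i,o}$ under the three assumptions.
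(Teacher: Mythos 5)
Your proposal is correct and follows essentially the same route as the paper's proof: both verify that Assumptions~\ref{assume:class_K_scalar}--\ref{assume:Lgi^2_is_0} make the rows of the compact form \eqref{eq:multi-obs-compact} coincide with $\Phi_{i,o}$ from \eqref{eq:sec_order_safety_cond_separated}, and both then invoke the invariance argument from the proof of Lemma~\ref{lem:CCBF} to pass from $\Phi_{i,o} \geq 0$ to $\phi_{i,o}^2 \geq 0$, which is exactly \eqref{eq:lem_prob_statement}. Your closing remark---that this last implication is not pointwise but rests on the comparison/initial-condition argument with the same $u_i^s$ shared between the two expressions---is a subtlety the paper leaves implicit by citing Lemma~\ref{lem:CCBF}'s proof, and you are right to flag it.
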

\begin{proof}
    By the proof of Lemma~\ref{lem:CCBF}, if $\Phi_{i,o}(x, x_o, u_i^s, u_{\mathcal{N}_i}^s) \geq 0$ for some $u_i^s \in \mathcal{U}_i^s$, then $\phi_{i,o}^2(x, x_o, u_i^s) = \dot{\phi}_{i,o}^1 (x, x_o, u_i^f, u_i^s) + \alpha_i^1\left(\phi_{i,o}^1(x_i, x_o)\right) \geq 0$ also. Thus, since \eqref{eq:lem_compact_condition} implies that $\Phi_{i,o}(x, x_o, u_i^s, u_{\mathcal{N}_i}^s) \geq 0, \forall o \in \mathcal{O}_i$, then by Assumptions~\ref{assume:class_K_scalar}-\ref{assume:Lgi^2_is_0}, which simplify the expression of \eqref{eq:sec_order_safety_cond_separated} by selecting scalar class-$\mathcal{K}$ functions by Assumption~\ref{assume:class_K_scalar}, setting $\mathcal{L}_{\bar{g}_i}\phi_{i,o}^1(x_i, x_o)\dot{u}_i^s = 0$ by Assumption~\ref{assume:u_i_piecewise_constant}, and setting $u_i^{s\top}\mathcal{L}_{\bar{g}_i}^2 \phi_{i,o}^1(x, x_o) u_i^s = 0, \forall o\in \mathcal{O}_i$ by Assumption~\ref{assume:Lgi^2_is_0}, the set of agent control inputs that satisfy \eqref{eq:lem_compact_condition} must also satisfy \eqref{eq:lem_prob_statement}.
\end{proof}

We now describe the collaborative safety algorithm and how it may be used to communicate safety needs to neighboring agents in the formation control problem. See \cite{butler2023distributed} for a more detailed discussion on the construction of the collaborative safety algorithm with respect to a single safety condition for each agent. The central idea of this algorithm involves rounds of communication between agents, where each round of communication between agents, centered on an agent $i\in [n]$, involves the following steps:
\begin{enumerate}
    \item Receive (send) requests from (to) neighbors in $\mathcal{N}_i$
    \item Process requests and determine needed compromises
    \item Send (receive) adjustments to (from) neighboring nodes in $\mathcal{N}_i$.
\end{enumerate}
The end result of this algorithm will be some set of constrained allowable filtered actions for each agent $\overline{\underline{\mathcal{U}}}_i^s \subseteq \mathcal{U}_i^s$, where any safe action selected from this set will also be safe for all neighbors in $\mathcal{N}_i$.
In order to determine what requests should be made of neighbors, each agent must compute its maximum safety capability with respect to the second-order safety condition as defined by \eqref{eq:sec_order_safety_cond}. However, since the safety capability of agent $i$ with respect to multiple obstacles is represented as a vector, rather than a scalar value for a single condition (\cite{butler2023distributed}), we must carefully define the maximum safety capability for agents in the context of formation control with multiple obstacles. Therefore, in the following section, we define a method for determining the vector of maximum capability for agent $i$ with respect to multiple obstacles and how this information may be used to communicate its safety needs to neighbors.

\subsection{Maximum Capability Given Multiple Obstacles} \label{sec:max_cap_comp}
To define the maximum capability of an agent $i$ with respect to multiple obstacles, we begin by making the following assumption. 
\begin{assumption} \label{assume:nonempty_convex_contraints}
Let $\mathcal{U}_i^s$ be a non-empty convex set which is defined by $\mathcal{U}_i^s = \{u_i^s \in \mathbb{R}^{M_i}: G_iu_i^s - l_i \leq \mathbf{0}\}$.
\end{assumption}
\noindent
In order to determine the ``safest" action agent $i$ may take given multiple obstacles, we want to choose the action $u_i^s$ that maximizes the minimum entry of the vector $B_i u_i^s$ from \eqref{eq:multi-obs-compact}, which is defined by the following max-min optimization problem: 
\begin{equation}\label{eq:opt_capcity_input_action}
    \max_{u_i^s \in \mathcal{U}_i^s} \min_{1\leq k \leq |\mathcal{O}_i|} [B_i u_i^s]_{k}.
\end{equation}
This problem characterizes the optimal control strategy $u^*_i$ that attempts to satisfy the safety constraint \eqref{eq:sec_order_safety_cond_separated} imposed on agent $i$ for each obstacle $o \in \mathcal{O}_i$ that is at most risk of being violated (or being violated the worst).
We can reduce \eqref{eq:opt_capcity_input_action} to a linear programming problem:
\begin{align}\label{eq:opt_capcity_input_action_LP}
    \min_{\xi_i}&~d^\top \xi_i \nonumber\\
    \text{s.t.}& \begin{bmatrix}
        \mathbf{0} & G_i\\
        \mathbf{1} & -B_i
    \end{bmatrix}\xi_i - \begin{bmatrix}
        l_i\\
        \mathbf{0}
    \end{bmatrix}\leq \mathbf{0},
\end{align}
where $d^\top = \begin{bmatrix}
    -1 & \mathbf{0}_{M_i}^\top
\end{bmatrix}$, $\xi_i^\top = \begin{bmatrix}
    \gamma_i & u_i^\top
\end{bmatrix}$, and $\gamma_i \in \mathbb{R}$ is a scalar that captures the performance of the optimal strategy~$u^*_i$.
The next proposition formally characterizes the equivalency of Problem~\eqref{eq:opt_capcity_input_action} and Problem~\eqref{eq:opt_capcity_input_action_LP}.
\begin{proposition}\label{pp:equiv_opt_prob}
    Given Assumptions~\ref{assume:class_K_scalar}-\ref{assume:nonempty_convex_contraints}, the optimal solution of \eqref{eq:opt_capcity_input_action}:
    \begin{align*}
        u_i^* &= \arg\max_{u_i^s \in \mathcal{U}_i^s} \min_{1\leq k \leq |\mathcal{O}_i|} [B_i u_i^s]_{k}\\
        \gamma_i^* &= \max_{u_i^s \in \mathcal{U}_i^s} \min_{1\leq k\leq |\mathcal{O}_i|} [B_i u_i^s]_{k}
    \end{align*}
    exists if and only if there exists an optimal solution in \eqref{eq:opt_capcity_input_action_LP}, $\xi_i^{(*)} = \begin{bmatrix}
        \gamma_i^{(*)}& {u_i^{(*)}}^\top
    \end{bmatrix}^\top$, 
    and 
    $\gamma_i^{(*)} = \gamma_i^*$, $u_i^{(*)} = u_i^*$.
\end{proposition}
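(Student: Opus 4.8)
The plan is to prove the equivalence by showing that the max-min problem (\ref{eq:opt_capcity_input_action}) and the linear program (\ref{eq:opt_capcity_input_action_LP}) are two encodings of the same optimization, so that an optimal solution of one yields an optimal solution of the other with matching objective values. The key observation is the standard epigraph reformulation of a max-min (or min-max) objective: maximizing $\min_k [B_i u_i^s]_k$ over $u_i^s \in \mathcal{U}_i^s$ is equivalent to maximizing an auxiliary scalar $\gamma_i$ subject to the constraint that $\gamma_i$ lies below every coordinate, i.e. $\gamma_i \leq [B_i u_i^s]_k$ for all $1 \le k \le |\mathcal{O}_i|$. This last family of constraints is exactly $\gamma_i \mathbf{1} - B_i u_i^s \leq \mathbf{0}$, which is precisely the second block row of the LP constraint in (\ref{eq:opt_capcity_input_action_LP}); the first block row $G_i u_i^s - l_i \le \mathbf{0}$ merely re-encodes membership $u_i^s \in \mathcal{U}_i^s$ via Assumption~\ref{assume:nonempty_convex_contraints}. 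Since $d^\top \xi_i = -\gamma_i$, minimizing $d^\top \xi_i$ maximizes $\gamma_i$, closing the loop.

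First I would establish the forward direction. Suppose $u_i^*$ and $\gamma_i^*$ solve (\ref{eq:opt_capcity_input_action}). I would set $\xi_i = \begin{bmatrix} \gamma_i^* & u_i^{*\top} \end{bmatrix}^\top$ and verify feasibility in (\ref{eq:opt_capcity_input_action_LP}): the first block holds because $u_i^* \in \mathcal{U}_i^s$, and the second block holds because $\gamma_i^* = \min_k [B_i u_i^*]_k \le [B_i u_i^*]_k$ for each $k$. I would then argue this point is LP-optimal: if some feasible $\tilde\xi_i = \begin{bmatrix} \tilde\gamma_i & \tilde u_i^\top \end{bmatrix}^\top$ had $\tilde\gamma_i > \gamma_i^*$, then feasibility of its second block forces $\tilde\gamma_i \le \min_k [B_i \tilde u_i]_k$ with $\tilde u_i \in \mathcal{U}_i^s$, contradicting the optimality of $\gamma_i^*$ as the max-min value. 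For the reverse direction, starting from an LP optimizer $\xi_i^{(*)} = \begin{bmatrix} \gamma_i^{(*)} & u_i^{(*)\top} \end{bmatrix}^\top$, I would first note that at optimality the auxiliary variable is tight, $\gamma_i^{(*)} = \min_k [B_i u_i^{(*)}]_k$ (otherwise one could increase $\gamma_i^{(*)}$ while preserving feasibility, strictly improving the LP objective). Feasibility of the first block gives $u_i^{(*)} \in \mathcal{U}_i^s$, and an analogous contradiction argument shows $u_i^{(*)}$ attains the max-min value, so $\gamma_i^{(*)} = \gamma_i^*$ and $u_i^{(*)} = u_i^*$ up to the usual caveat about multiple maximizers.

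The main obstacle, and the part deserving the most care, is handling existence cleanly in an "if and only if" statement rather than assuming a solution exists on both sides a priori. I would lean on Assumption~\ref{assume:nonempty_convex_contraints} (nonempty convex polyhedral $\mathcal{U}_i^s$) to guarantee the feasible region of the LP is nonempty, and I would need to confirm that an optimum of the max-min exists precisely when the LP objective is bounded below, translating boundedness of $\gamma_i$ into boundedness of the inner minimum over a feasible control set. A secondary subtlety is the uniqueness claim $u_i^{(*)} = u_i^*$: in general, neither the LP nor the max-min problem need have a unique maximizing $u_i^s$, so I would either interpret the equality as an identification of optimal-solution sets, or invoke whatever strict convexity or nondegeneracy is implicitly assumed, and I would flag this explicitly so the statement is not overclaimed.
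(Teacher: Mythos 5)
Your proposal is correct and follows essentially the same route as the paper's proof: both rest on the standard epigraph reformulation, replacing $\min_{k} [B_i u_i^s]_k$ by an auxiliary scalar $\gamma_i$ constrained by $\gamma_i \leq [B_i u_i^s]_k$ for all $k$, re-encoding $u_i^s \in \mathcal{U}_i^s$ as $G_i u_i^s - l_i \leq \mathbf{0}$, and converting $\max \gamma_i$ into $\min d^\top \xi_i$. The paper presents this as a terse chain of problem equivalences concluded by transitivity, whereas you spell out the two feasibility-and-optimality directions explicitly and flag the existence and non-uniqueness caveats that the paper's statement leaves implicit---a more careful execution of the same idea.
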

\begin{proof}
    We first notice that the following two optimization problems are equivalent:
    \begin{align}
        &\begin{cases}\label{eq:pp:step1}
            \max_{u_i^s}\min_{1\leq k \leq |\mathcal{O}_i|}~&[B_i u_i^s]_{k}\\
            \text{s.t.}~& G_iu_i^s - l_i \leq \mathbf{0},\\
        \end{cases}\\
        &\begin{cases}\label{eq:pp:step2}
            \max_{u_i^s, \gamma_i}~&\gamma_i\\
            \text{s.t.}~&G_iu_i^s - l_i \leq \mathbf{0}\\
                       & \gamma_i \leq [B_i u_i^s]_{k}~\qquad\forall 1\leq k \leq |\mathcal{O}_i|,
        \end{cases}
    \end{align}
    by substituting $\min_{1\leq k \leq |\mathcal{O}_i|}~[A_i(x_i)u_i^s]_{k}$ with an achievable lower bound $\gamma_i$ on each $[A_i(x_i)u_i^s]_{k}$, that is, $\gamma_i \leq [A_i(x_i)u_i^s]_{k}~\forall 1\leq k \leq |\mathcal{O}_i|$.
    Furthermore, we can show that
    \eqref{eq:opt_capcity_input_action} is equivalent to \eqref{eq:pp:step1} by rewriting $u_i^s \in \mathcal{U}_i^s$ explicitly as an optimization constraint, and similarly, we can show that
    \eqref{eq:opt_capcity_input_action_LP} is equivalent to \eqref{eq:pp:step2} by setting $\xi_i = \begin{bmatrix}
        \gamma_i & u_i^{s\top}
    \end{bmatrix}^\top$, $d^\top = \begin{bmatrix}
        -1 & 0_{M_i}^\top
    \end{bmatrix}$, and realizing that $\argmax \gamma_i = \argmin -\gamma_i$.
    The transitivity of equivalence relations concludes the proof. 
\end{proof}

Thus, we have a method for computing a vector that represents the maximum capability of agent $i$ with respect to multiple obstacles $\mathcal{O}_i$. If $\gamma_i$ is negative, then agent $i$ will make a request to its neighboring agents that will limit their control actions $\overline{\underline{\mathcal{U}}}_j^s$ to those that will satisfy $[\Phi_i]_k \geq 0, \forall k \in \mathcal{O}_i$, assuming agent $i$ takes the action $u_i^*$. In the following section, we describe how our \textit{modified collaborative safety algorithm} incorporates this capability vector at a high level. 

\subsection{Collective Safety Through Distributed Collaboration} \label{sec:colab_safety_thm}
Given our addition to the collaborative safety algorithm from \cite{butler2023distributed} to incorporate multiple safety constraints, the computation steps and convergence properties of our algorithm remain largely unchanged in Algorithm~\ref{alg:mod_colab_safety} due to the fact that communication of multiple safety constraints from one neighbor is equivalent to multiple neighbors communicating a single constraint in the computation of control restrictions.

\begin{algorithm}
    \caption{Modified Collaborative Safety}\label{alg:mod_colab_safety}
    \begin{algorithmic}[1]
        \State $\bar{c}_{ij} \gets \mathbf{0}, \forall j \in \mathcal{N}_i$
        \State $\overline{\underline{\mathcal{U}}}_i^s \gets \mathcal{U}_i^s$
        \Repeat
            \State $\bar{c}_i \gets$ Compute maximum capability by solving \eqref{eq:opt_capcity_input_action_LP}
            \State $\delta_i \gets \bar{c}_i - \sum_{j \in \mathcal{N}_i}\bar{c}_{ij}$
            \State $\bar{c}_{ij}, \; \overline{\underline{\mathcal{U}}}_i^s \gets$ Perform SPRU (\cite{butler2023distributed})
        \Until{$\bar{c}_{ij}$ remains constant \textbf{and} $[\delta_i]_{k \in \left[|\mathcal{O}_i| \right]} \geq 0$}
    \end{algorithmic}
\end{algorithm}
\noindent
We denote SPRU as an abbreviation of (S)end/receive requests, (P)rocess requests, (R)ecieve/send adjustments, (U)pdate constraints w.r.t. adjustments as detailed in \cite{butler2023distributed}. We yield the following result on the collective safety of a formation under the modified collaborative safety algorithm. 

\begin{theorem} \label{thm:colab_safety_alg}
    Let Assumptions~\ref{assume:class_K_scalar}-\ref{assume:nonempty_convex_contraints} hold for all $i \in [n]$. If Algorithm~\ref{alg:mod_colab_safety} is convergent and $\overline{\underline{\mathcal{U}}}_i^s(x(t)) \neq \emptyset, \forall i \in [n], \forall t \in \mathcal{T}$, then \eqref{eq:problem_statement} yields $\bigcap_{o \in \mathcal{O}_i}\mathcal{C}_{i,o}^1 \cap \mathcal{C}_{i,o}^2$ forward invariant during $t \in \mathcal{T}$ for all $i \in [n]$.
\end{theorem}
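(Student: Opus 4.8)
The plan is to certify that, at every time $t \in \mathcal{T}$, the converged algorithm supplies an admissible compromise action for which the second-order condition $\Phi_{i,o}(x, x_o, u_i^s, u_{\mathcal{N}_i}^s) \geq 0$ holds simultaneously for every $i \in [n]$ and every $o \in \mathcal{O}_i$, and then to descend through Lemma~\ref{lem:equivalent_solution} and Lemma~\ref{lem:aCBF} to forward invariance of $\bigcap_{o \in \mathcal{O}_i}\mathcal{C}_{i,o}^1 \cap \mathcal{C}_{i,o}^2$. Assumptions~\ref{assume:class_K_scalar}--\ref{assume:nonempty_convex_contraints} are what let us work with the compact, control-affine form \eqref{eq:multi-obs-compact}, and Proposition~\ref{pp:equiv_opt_prob} guarantees that the capability vector $\bar{c}_i$ computed in Line~4 is exactly the per-obstacle maximum margin $\min_k [B_i u_i^s]_k$ attainable by agent $i$ acting alone within its current set $\overline{\underline{\mathcal{U}}}_i^s$, so the quantities fed into the termination test are the intended ones.

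First I would unpack the termination condition. Convergence means the loop exits, so at the fixed point $\bar{c}_{ij}$ is constant for all $j \in \mathcal{N}_i$ and $[\delta_i]_k \geq 0$ for every obstacle index $k$, where $\delta_i = \bar{c}_i - \sum_{j \in \mathcal{N}_i} \bar{c}_{ij}$. The vectors $\bar{c}_{ij}$ record the control adjustments that neighbor $j$ has committed to (through the SPRU exchange) on behalf of agent $i$'s constraints, so that the neighbor contribution $A_i u_{\mathcal{N}_i}^s$ in \eqref{eq:multi-obs-compact} is bounded below componentwise by $\sum_{j \in \mathcal{N}_i} \bar{c}_{ij}$ once every $\overline{\underline{\mathcal{U}}}_j^s$ has stabilized. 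The structural fact I must establish is that $[\delta_i]_k \geq 0$ for all $i$ and $k$, together with the mutual consistency of the committed adjustments across all edges, implies that there is a jointly admissible selection $(u_i^s, u_{\mathcal{N}_i}^s) \in \mathcal{U}_i^s \times \mathcal{U}_{\mathcal{N}_i}^s$, drawn from the nonempty terminal sets, for which $\Phi_{i,o} \geq 0$ for every $i$ and $o$. This exhibits precisely the witness required by the CCBF definition \eqref{eq:CCBF_cond}, so $h_{i,o}$ is a CCBF at the current state.

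With the CCBF property in hand, I would invoke Lemma~\ref{lem:equivalent_solution}: under Assumptions~\ref{assume:class_K_scalar}--\ref{assume:Lgi^2_is_0}, any control satisfying $\Phi_i \geq \mathbf{0}$ also satisfies the first-order safety condition $\dot{\phi}_{i,o}^1(x, x_o, u_i^f, u_i^s) + \alpha_i^1(\phi_{i,o}^1(x_i, x_o)) \geq 0$, which is exactly the constraint of \eqref{eq:problem_statement}. Hence \eqref{eq:problem_statement} is feasible for every agent, and its minimizer satisfies the aCBF condition \eqref{eq:first_order_safety_cond}. Because the nonemptiness hypothesis $\overline{\underline{\mathcal{U}}}_i^s(x(t)) \neq \emptyset$ is assumed for all $t \in \mathcal{T}$, this satisfiability persists along the entire trajectory rather than only at the initial instant, so $h_{i,o}$ is an aCBF throughout $\mathcal{T}$; Lemma~\ref{lem:aCBF} then gives forward invariance of $\mathcal{C}_{i,o}^1 \cap \mathcal{C}_{i,o}^2$ for each $o$, and a trajectory beginning in the intersection stays in every factor, which yields forward invariance of $\bigcap_{o \in \mathcal{O}_i}\mathcal{C}_{i,o}^1 \cap \mathcal{C}_{i,o}^2$ for all $i \in [n]$. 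Lemma~\ref{lem:CCBF} may be cited in parallel to confirm that the capability set $\mathcal{C}_{i,o}^3$ is itself preserved, so that the state never leaves the region where the first-order condition is enforceable.

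The main obstacle I anticipate is the structural fact in the second paragraph: bridging the purely local, per-agent termination certificate $[\delta_i]_k \geq 0$ to a single globally consistent tuple of compromise actions that simultaneously satisfies every agent's second-order condition. Because the $\bar{c}_{ij}$ are negotiated pairwise, one must verify that the adjustment agent $i$ relies on from neighbor $j$ is exactly the adjustment $j$ has agreed to supply (no double counting, no over-commitment), and that successive restrictions of each $\overline{\underline{\mathcal{U}}}_j^s$ during a round never invalidate commitments made earlier in that round. Convergence is precisely what guarantees these commitments have reached a fixed point; I would lean on the convergence hypothesis and on the bookkeeping established for the single-constraint case in \cite{butler2023distributed}, now applied per obstacle by treating each agent's multiple constraints as if they were contributed by multiple neighbors, as noted before Algorithm~\ref{alg:mod_colab_safety}. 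This reduction is where the bulk of the care is required.
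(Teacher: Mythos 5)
Your proposal is correct and follows essentially the same route as the paper: the convergence and nonemptiness hypotheses supply the CCBF witness (Lemma~\ref{lem:CCBF}), Lemma~\ref{lem:equivalent_solution} descends from the second-order condition to the first-order constraint of \eqref{eq:problem_statement}, and Lemma~\ref{lem:aCBF} then gives forward invariance of $\bigcap_{o \in \mathcal{O}_i}\mathcal{C}_{i,o}^1 \cap \mathcal{C}_{i,o}^2$. Your unpacking of the termination test and your flagged concern about gluing the pairwise commitments $\bar{c}_{ij}$ into a globally consistent action tuple are details the paper's own proof leaves implicit (deferring them to the convergence hypothesis and to \cite{butler2023distributed}), so they strengthen rather than diverge from the published argument.
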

\begin{proof}
If Algorithm~\ref{alg:mod_colab_safety} is convergent for all $t \in \mathcal{T}$ and there exists a non-empty $\overline{\underline{\mathcal{U}}}_i^s(x(t))$ for all $i \in [n]$, then by Lemmas~\ref{lem:CCBF} and \ref{lem:equivalent_solution} we have any action taken by any agent from these constrained control sets must render $\bigcap_{o \in \mathcal{O}_i}\mathcal{C}_{i,o}^1 \cap \mathcal{C}_{i,o}^2 \cap \mathcal{C}_{i,o}^3$ forward invariant for all $i \in [n]$. Thus by applying control constraints $\overline{\underline{\mathcal{U}}}_i^s(x(t))$ to \eqref{eq:problem_statement} for each agent, we have by Lemma~\ref{lem:aCBF} that $\bigcap_{o \in \mathcal{O}_i}\mathcal{C}_{i,o}^1 \cap \mathcal{C}_{i,o}^2$ is also forward invariant during $t \in \mathcal{T}$ for all $i \in [n]$.
\end{proof}

In words, we have that if Algorithm~\ref{alg:mod_colab_safety} always terminates with a feasible set of safe actions for all agents, then Theorem~\ref{thm:colab_safety_alg} guarantees that using \eqref{eq:problem_statement} to choose safe actions for individual agents renders all agents safe for all time. Note that \eqref{eq:problem_statement} filters the agent's actions according to their individual safety constraints, where the set of allowable actions $\overline{\underline{\mathcal{U}}}_i^s$ is given by Algorithm~\ref{alg:mod_colab_safety}.
It should also be noted that it is not guaranteed for the modified collaborative safety algorithm to converge in the case where there are conflicting requests (either between neighbors or between multiple safety conditions) which may be possible in obstacle-dense environments. Thus, providing conditions under which the collaborative safety algorithm remains convergent under conflicting requests in an important direction for future work.

\section{Application Example} \label{sec:application}
We now illustrate the application of our collaborative safety algorithm to the safe cooperative formation control of a simplified two-dimensional agent system and simulate a multi-obstacle avoidance scenario.
\subsection{Virtual Mass-Spring Formation Model}
Consider a two-dimensional multi-agent system with distributed formation control dynamics defined by a virtual mass-spring model, with $x_i = [p_i^{\vec{x}}, p_i^{\vec{y}}, v_i^{\vec{x}}, v_i^{\vec{y}}]^\top$
\begin{equation} \label{eq:formation_dynamics_mass_spring}
    \dot{x}_i =
    \begin{bmatrix}
        v_i^{\vec{x}} \\ v_i^{\vec{y}} \\ 0 \\ 0
    \end{bmatrix}
    +
    \begin{bmatrix}
        0 & 0 \\
        0 & 0 \\
        1 & 0 \\
        0 & 1 
    \end{bmatrix}
    \left( u_i^f(x) - u_i^s \right)
\end{equation}
where
\begin{equation} \label{eq:formation_controller_mass_spring}
   u_i^f(x) =
   \begin{bmatrix}
    u_i^{f_{\vec{x}}} \\
    u_i^{f_{\vec{y}}}
   \end{bmatrix}
   =
   \frac{1}{m_i}
   \begin{bmatrix}
    \sum_{j \in \mathcal{N}_i} k_{ij} s_{ij} \sin \theta_{ij} - b_{ij}v_i^{\vec{x}} \\
    \sum_{j \in \mathcal{N}_i} k_{ij} s_{ij} \cos \theta_{ij} - b_{ij}v_i^{\vec{y}} \\
   \end{bmatrix}
\end{equation}
describes the desired formation behavior of the system, where agents behave as if coupled by mass-less springs with $k_{ij}$ and $b_{ij}$ being the spring and dampening constants for the virtual spring from agent $j$ to agent $i$, respectively, and
\begin{equation*}
    s_{ij} = L_{ij} - R_{ij}
\end{equation*}
denoting the stretch length of a given spring connection with resting length $R_{ij}$ and
\begin{equation*}
    L_{ij} = \Vert p_i - p_j\Vert_2
\end{equation*}
being the current length of the spring. We compute the $\vec{x}$ and $\vec{y}$ components of the stretched spring as
\begin{equation*}
    \sin \theta_{ij} = \frac{p_i^{\vec{x}} - p_j^{\vec{x}}}{L_{ij}}, \;\; \cos \theta_{ij} = \frac{p_i^{\vec{y}} - p_j^{\vec{y}}}{L_{ij}}.
\end{equation*}

\noindent
Thus, our induced coupling model then becomes
\begin{equation}
    \bar{f}_i(x) =
    \begin{bmatrix}
        v_i^{\vec{x}} \\ v_i^{\vec{y}} \\ u_i^{f_{\vec{x}}} \\ u_i^{f_{\vec{y}}}
    \end{bmatrix}
    , \;\;
    \bar{g}_i = 
    \begin{bmatrix}
        0 & 0 \\
        0 & 0 \\
        -1 & 0 \\
        0 & -1 
    \end{bmatrix}
\end{equation}
with the first-order safety condition for a given obstacle using the barrier function candidate \eqref{eq:rel_dist_barrierfunc} computed as
\begin{equation}
    \begin{aligned}
        \phi_{i,o}^1(x_i, x_o) &= 2\left[v_i^{\vec{x}}(p_i^{\vec{x}} - p_o^{\vec{x}}) + v_i^{\vec{y}}(p_i^{\vec{y}} - p_o^{\vec{y}}) \right] 
         + \alpha_i^0(h_{i,o}(x_i, x_o))
    \end{aligned}
\end{equation}
which yields the Lie derivatives of the safety condition with respect to the formation dynamics as
\begin{equation}
    \begin{aligned}
        \mathcal{L}_{\bar{f}_i}\phi_{i,o}^1(x, x_o) &= 2 v_i^{\vec{x}}(v_i^{\vec{x}} + \alpha_i^0(p_i^{\vec{x}}-p_o^{\vec{x}}))
        + v_i^{\vec{y}}(v_i^{\vec{y}} + \alpha_i^0(p_i^{\vec{y}}-p_o^{\vec{y}}))
        + u_i^{f_{\vec{x}}}(p_i^{\vec{x}} - p_o^{\vec{x}}) + u_i^{f_{\vec{y}}}(p_i^{\vec{y}} - p_o^{\vec{y}}) 
    \end{aligned}
\end{equation}
and
\begin{equation}
    \mathcal{L}_{\bar{g}_i}\phi_{i,o}^1(x_i, x_o) = 2 
    \begin{bmatrix}
        p_i^{\vec{x}} - p_o^{\vec{x}} & p_i^{\vec{y}} - p_o^{\vec{y}}
    \end{bmatrix}.
\end{equation}

It should be noted that given this mass-spring network formation control law, when computing the effect of control by agent $j$ on the safety conditions of agent $i$
yields
\begin{equation}
    \mathcal{L}_{\bar{g}_j} \mathcal{L}_{\bar{f}_i} \phi_{i,o}^1(x_i, x_o) = \mathbf{0}^{M_j}
\end{equation}
since the control input of agent $j$ does not appear until the next derivative of $\Phi_i$. In order to avoid unnecessary computations of additional partial derivatives, each agent computes the effect of neighboring control as if neighbors directly control their velocities, i.e.,
\begin{equation}
    \bar{g}_j = 
    \begin{bmatrix}
        -1 & 0 \\
        0 & -1 \\
        0 & 0 \\
        0 & 0 
    \end{bmatrix}, \forall j \in \mathcal{N}_i.
\end{equation}
This assumption is non-physical since it would require infinite acceleration for neighbors to achieve such a discontinuous instantaneous jump in velocity. However, if we assume a finite time interval $\tau >0$ during which our acceleration controller might achieve such a change in velocity, we can approximate the necessary acceleration constraints during that time. In other words, since these terms are used to communicate action limitations on neighbors, we may approximate acceleration limits over a given time interval by simply dividing the velocity constraints by the appropriate time window length. Therefore, if the velocity constraints communicated are
\begin{equation}
    \mathcal{U}^v = \{ u \in \mathbb{R}^M: Gu +l \leq \mathbf{0} \},
\end{equation}
then we may compute acceleration constraints for a given time interval $\tau > 0$ as
\begin{equation}
    \mathcal{U}^a = \left\{ u \in \mathbb{R}^M: \frac{1}{\tau} Gu +l \leq \mathbf{0} \right\}.
\end{equation}
In real-world applications, this reliance on a known time interval may cause challenges when accounting for communication delays and inconsistent processing and actuation time intervals.

\subsection{Simulations}

We construct an example of multi-obstacle avoidance for a fully connected 3-agent formation where the parameters of the virtual mass-spring system are $m_i = 0.5$, $r_{i,o} = 1$, $K_{ij}=3$, $R_{ij} = 3$, and $b_{ij} = 1$ for all $i \in [n]$, $j \in \mathcal{N}_j$, and $o \in \mathcal{O}$. Further, we set control magnitude limits for each agent $i$ as $\mathcal{U}_i = \{ u_i \in \mathbb{R}^2: \Vert u_i \Vert_{\infty} \leq 15 \}$. We then apply a constant control signal to a single agent which leads the formation through an obstacle field, where the initial and final positions of each agent and their respective trajectories through the obstacle field are shown in Figure~\ref{fig:obs_field_trajectory}. Each agent uses the modified collaborative safety algorithm described in Algorithm~\ref{alg:mod_colab_safety} to communicate its safety needs and accommodate safety requests to and from neighbors, respectively. Each agent then implements a first-order safety filter on their control actions as described by \eqref{eq:problem_statement} while incorporating the control constraints $\overline{\underline{\mathcal{U}}}_i^s$ computed using Algorithm~\ref{alg:mod_colab_safety}. We plot the safety filter control signal including the constant leader control signal for agent $0$ in Figure~\ref{fig:control_safety_filter}, which shows the safety filter control signal $u_i^s$ for both the $\vec{x}$ and $\vec{y}$ components over time. For a video of this simulation, see \href{https://youtube.com/shorts/aRki-Mbna3w}{https://youtube.com/shorts/aRki-Mbna3w}. To view our simulation code, see \cite{butlerRepo2023}.

Note that the follower agents in this simulation end up switching positions in the formation as a consequence of the induced spring dynamics for the system. However, it should also be noted that the simplistic nature of the formation dynamics in this example may lead to restricted behavior when encountering obstacles. Therefore, replacing the mass-spring network with more sophisticated formation control protocols will induce smarter formation behavior such as better group path planning, formation coordination, etc.
The advantages of the proposed safety filtering algorithm can be leveraged as long as the additional formation control protocols are piece-wise differentiable with respect to $x_i$ for all $i \in [n]$.

\begin{figure}
    \centering
    \includegraphics[width=.8\columnwidth]{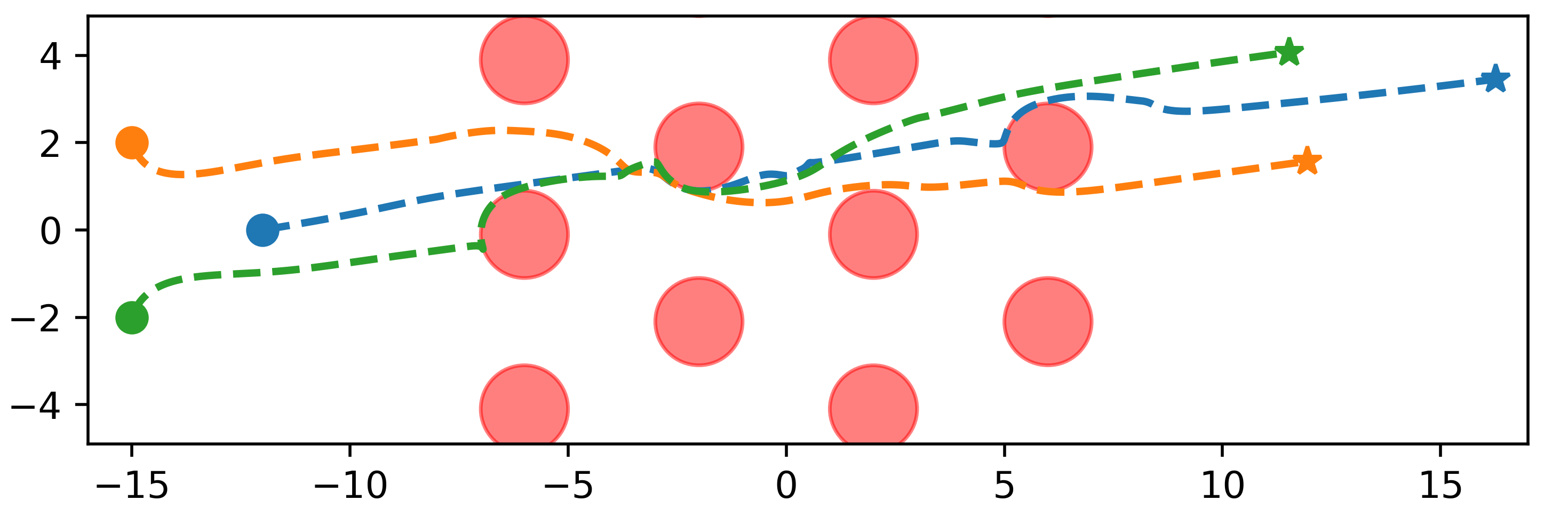}
    \caption{The trajectories of a 3-agent formation through an obstacle field, where a leader agent (blue) is given a constant control signal directing it straight through the field. Each agent implements safety filtering according to Algorithm~\ref{alg:mod_colab_safety} and \eqref{eq:problem_statement} to avoid obstacles while maintaining a formation behavior, according to \eqref{eq:formation_dynamics_mass_spring} and \eqref{eq:formation_controller_mass_spring}.}
    \label{fig:obs_field_trajectory}
\end{figure}

\begin{figure}
    \centering
    \begin{subfigure}[b]{0.48\textwidth}
         \centering
         \begin{overpic}[width=\textwidth]{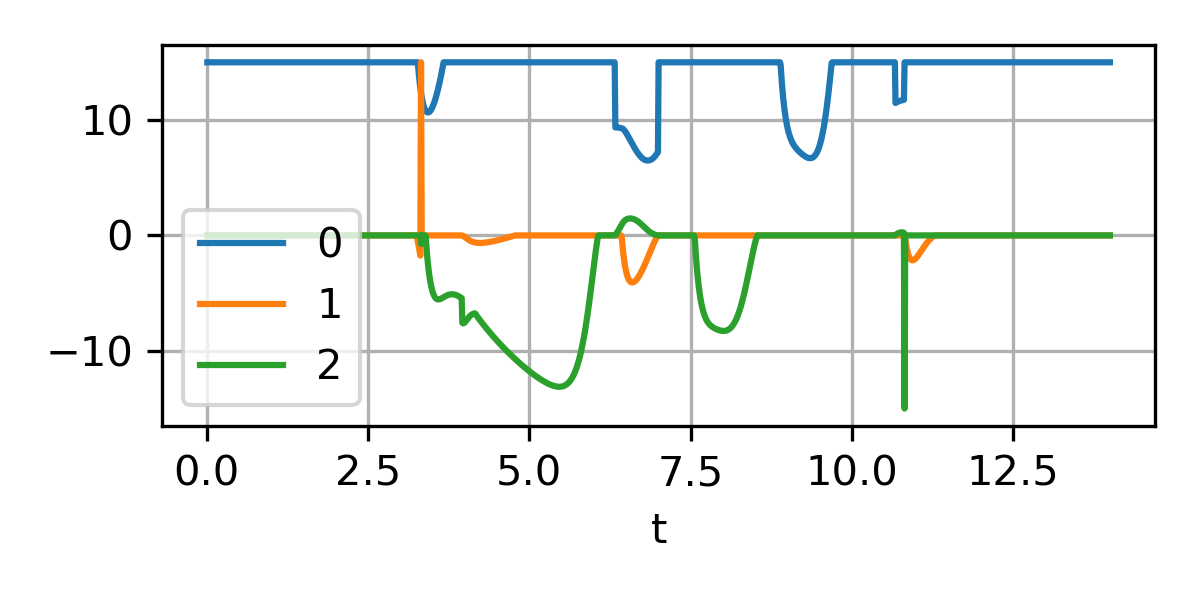}
             \put(-2,29){\parbox{0.8\linewidth}\normalsize \rotatebox{90}{\Large $u_i^{s,\vec{x}}$}}
         \end{overpic} 
    \end{subfigure}
    \hfill
    \begin{subfigure}[b]{0.48\textwidth}
         \centering
         \begin{overpic}[width=\textwidth]{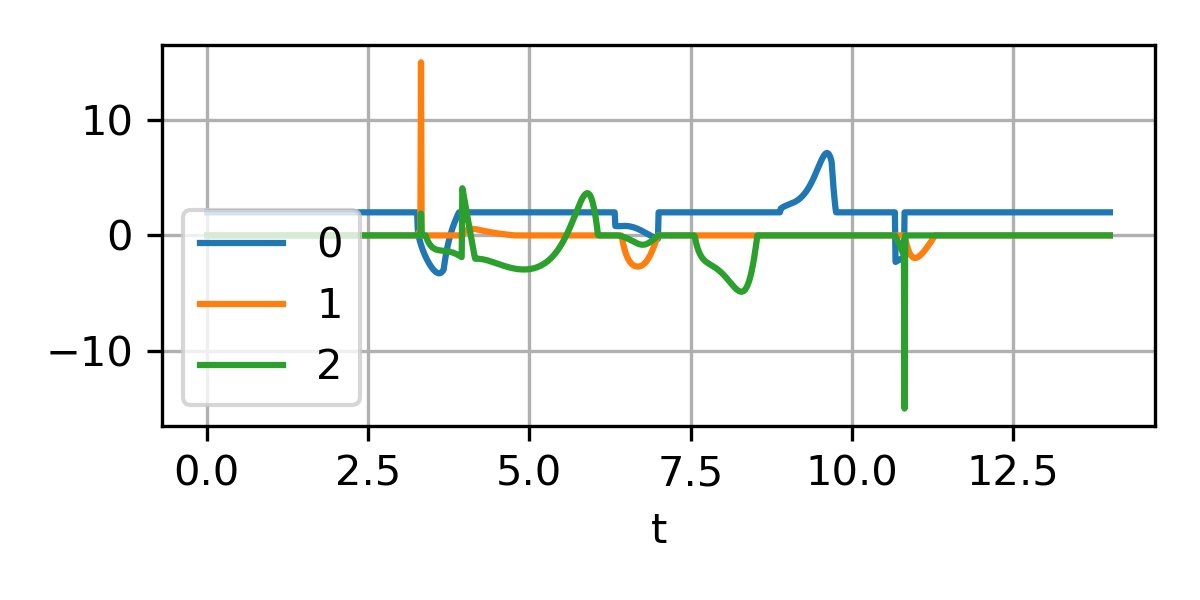}
             \put(-2,29){\parbox{0.8\linewidth}\normalsize \rotatebox{90}{\Large $u_i^{s,\vec{y}}$}}
         \end{overpic}
    \end{subfigure}
    \caption{The safety-filtered control signals for each agent in the $\vec{x}$ component (left) and $\vec{y}$ component (right) of $u_i^s$, which are computed using Algorithm~\ref{alg:mod_colab_safety} and \eqref{eq:problem_statement}, during the traversal of the formation through the obstacle field shown in Figure~\ref{fig:obs_field_trajectory}. Note that a constant control signal is given to agent $0$ (blue), which is included in the modified control signal.}
    \label{fig:control_safety_filter}
\end{figure}

\section{Conclusions}

In this paper, we have presented a method for applying safety-filtered control to arbitrarily distributed formation control algorithms through active communication of safety needs between neighboring agents in formation. We have modified a collaborative safety algorithm from our previous work \cite{butler2023distributed} to account for the communication and processing of multiple safety conditions and shown that, if the algorithm is convergent for all agents, then the formation is guaranteed to remain safe. 
Directions for future work include an analysis of the convergence for the modified collaborative safety algorithm under conflicting safety requests, as well as incorporating robustness to sources of uncertainty in our conditions for safety guarantees. 
Further, it should be noted that we make no assumptions about the real-time computation and communication of safety requests between neighbors and that the computation load for each agent increases as more obstacles are added to the environment, including other neighboring agents. Therefore, to bring this safety-filtering algorithm to real-time applications, in future work we must consider several real-world challenges in the implementation of active collaboration between communicating agents.

\bibliographystyle{unsrtnat}
\bibliography{references}  






\end{document}